\DeclareMathOperator{\C}{C}
\DeclareMathOperator{\Con}{Con}
\DeclareMathOperator{\BCon}{\mathbf{Con}}
\newtheorem{theorem}{Theorem}[section]
\newtheorem{definition}[theorem]{Definition}
\newtheorem{lemma}[theorem]{Lemma}
\newtheorem{proposition}[theorem]{Proposition}
\newtheorem{remark}[theorem]{Remark}
\newtheorem{example}[theorem]{Example}
\newtheorem{corollary}[theorem]{Corollary}
\title{On ring-like event systems in quantum logic} 
\author{Dietmar Dorninger and Helmut L\"anger}
\date{}
\begin{document}

\maketitle


\begin{abstract} 
A class of ring-like event systems (RLSEs) is studied that generalizes Boolean rings. Quantum logics represented by orthomodular lattices are characterized within this class and the correspondence between Boolean algebras and Boolean rings is enlarged to orthomodular lattices. The structure of RLSEs and various subclasses is analysed and classical logics are especially identified. Moreover, sets of numerical events within different contexts of physical problems are described. A numerical event is defined as a function $p$ from a set $S$ of states of a physical system to $[0,1]$ such that $p(s)$ is the probability of the occurrence of an event when the system is in state $s\in S$. In particular, the question is answered whether a given (small) set of numerical events will give rise to the assumption that one deals with a classical physical system or a quantum mechanical one.  
\end{abstract}

{\bf AMS Subject Classification:} 06C15, 03G12, 81P16

{\bf Keywords:} Quantum logic, orthomodular lattice, ring-like structure of events, numerical event

\section{Introduction}

In quantum mechanics so-called quantum logics, also referred to as event systems, are an essential tool for theoretical reasoning and practical computations. The most common event systems are orthomodular lattices (and generalizations of them), in particular, the lattices of closed subspaces of a separable Hilbert space, known as Hilbert logics. Orthomodular lattices can be viewed as a generalization of Boolean algebras, which are characteristic for event systems in classical physics. So the question arises whether the logic behind an experiment might be a Boolean algebra, which, being in one-to-one correspondence to a Boolean ring, can also be understood as a Boolean ring. As often common with electrical engineering, calculations within rings are sometimes preferred to carrying out calculations within lattices. Then the question is whether a logic is a Boolean ring which entails the problem to find an appropriate generalization of Boolean rings corresponding to orthomodular lattices, a problem we will answer in this paper.

For this end we first recall the definition of ring-like structures of events (RLSEs), which we will later on reformulate for a wide class of RLSEs $\mathbf R=(R,+,\cdot,0,1)$ of characteristic $2$ (which means that they satisfy the identity $x+x\approx0$) showing that these structures can be simply obtained by weakening customary axioms of Boolean rings or also by more suggestive other laws.   

\begin{definition}\label{def1}
{\rm(}cf.\ {\rm\cite{DL21})} A {\em ring-like structure of events (RLSE)} is an algebra $(R,+,\cdot,0,1)$ of type $(2,2,0,0)$ such that $(R,\cdot,0,1)$ is a bounded meet-semilattice and satisfying the following identities:
\begin{enumerate}[{\rm(R1)}]
\item $x+y\approx y+x$,
\item $(xy+1)(x+1)+1\approx x$,
\item $\big((xy+1)x+1\big)x\approx xy$,
\item $xy+(x+1)\approx(xy+1)x+1$.
\end{enumerate}
\end{definition}

As one can easily verify, all Boolean rings are RLSEs.

\begin{definition}
An {\rm RLSE} $(R,+,\cdot,0,1)$ is called {\em specific} if it satisfies the identity
\begin{enumerate}
\item[{\rm(R5)}] $x+y\approx x(y+1)+(x+1)y$.
\end{enumerate}
\end{definition}

As we will note below, specific RLSEs are of characteristic $2$ (which, in general, is not the case). Moreover, we will show that there is a one-to-one correspondence between orthomodular lattices and specific RLSEs. This means that the class of all RLSEs is larger than the class of specific RLSEs.

In this paper we will consider various classes of RLSEs, answer the question when an RLSE is a Boolean ring, study structural properties of RLSEs and link RLSEs to so-called {\em algebras of numerical events} (which are sets of probabilities that can be gained by measurements -- cf.\ \cite{BM91} and \cite{MT}). Next we will apply results obtained for RLSEs to algebras of numerical events. We will show that under certain conditions the operation $+$ of RLSEs will coincide with the summation of real functions and the order of the elements of an RLSE with the order of functions. Further, we will give answers to the question, whether a (small) set of numerical events obtained by measurements will justify that one deals with a classical physical system or not. Finally we will weaken the concept of RLSEs by omitting axiom (R3) and associate these structures to sets of numerical events endowed with operations which are relevant for experiments.

To some extent our research is related to the one-to-one correspondence of arbitrary bounded lattices with an antitone involution and so-called pGBQRs (partial generalized Boolean quasirings). -- For further results on pGBQRs cf.\ \cite{BDM} and \cite{DDL10b} -- \cite{DLM01}.

\section{Elementary properties of ring-like structures of \\
events}

Dealing with orthomodular lattices we will denote the supremum of two of its elements $x,y$ by $x\vee y$, their infimum by $x\wedge y$, the complement of an element $x$ by $x'$ and write $x\perp y$ if $x$ and $y$ are orthogonal, i.e.\ if $x\wedge y'=x$. Further we agree to define for $x,y$ of an RLSE $R$, $x\leq y$ if and only if $xy=x$ and to call $x$ and $y$ orthogonal to each other (as ring-like elements), if $x(1+y)=x$.

For every algebra $\mathbf R=(R,+,\cdot,0,1)$ of type $(2,2,0,0)$ let $\mathbb L(\mathbf R)$ denote the algebra $(R,\vee,\wedge,{}',0,1)$ defined by
\begin{align*}
  x\vee y & :=(x+1)(y+1)+1, \\
x\wedge y & :=xy, \\
       x' & :=x+1
\end{align*}
for all $x,y\in R$. As already shown in \cite{DL21} (cf.\ Theorem~2.1) if $\mathbf R$ is an RLSE then $\mathbb L(\mathbf R)$ is an orthomodular lattice. We will likewise use the operations of this lattice within RLSEs. Obviously, the lattice-theoretic orthogonality relation and the one defined above for RLSEs coincide. -- We notice that for RLSEs the orthomodular lattice $\mathbb L(\mathbf R)$ can be a Boolean algebra without $\mathbf R$ being a Boolean ring (cf.\ \cite{DL21}), but this will not be the case with specific RLSEs, as we will show below.

In the following we will make use of the following theorem.

\begin{theorem}\label{th0}
{\rm(}cf.\ {\rm\cite{DL21})} Let $\mathbf R=(R,+,\cdot,0,1)$ be an algebra of type $(2,2,0,0)$. Then $\mathbf R$ is an {\rm RLSE} if and only if $\mathbb L(\mathbf R)=(R,\vee,\wedge,{}',0,1)$ is an orthomodular lattice and $+$ satisfies the following conditions for all $x,y\in R$:
\begin{enumerate}[{\rm(a)}]
\item $x+y=y+x$,
\item $x+1=x'$, 
\item $x+y=x\vee y$ if $x\leq y'$.
\end{enumerate}
\end{theorem}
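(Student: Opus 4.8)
The plan is to prove both implications by systematically rewriting the ring-like identities (R1)--(R4) in the language of the derived algebra $\mathbb L(\mathbf R)$, using $x\wedge y=xy$, $x'=x+1$ and $x\vee y=(x+1)(y+1)+1$. The key preliminary observation is that (R2) and (R3) involve the operation $+$ only through the complement $x+1=x'$; hence, once $\mathbb L(\mathbf R)$ is known to be an orthomodular lattice, they collapse to pure lattice identities. Only (R4) carries genuine information about $+$, and it will turn out to be essentially a reformulation of condition (c): both express that ring addition agrees with the lattice join precisely on orthogonal pairs.

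For the implication from right to left, I would assume that $\mathbb L(\mathbf R)$ is an orthomodular lattice and that $+$ satisfies (a)--(c), and verify (R1)--(R4). Identity (R1) is literally (a). For (R2) I would compute, using (b) and De Morgan, $(xy+1)(x+1)+1=\big((x\wedge y)'\wedge x'\big)'=(x\wedge y)\vee x=x$, the last step being absorption since $x\wedge y\leq x$. For (R3) the same rewriting gives $\big((xy+1)x+1\big)x=\big((x\wedge y)\vee x'\big)\wedge x$, which equals $x\wedge y$ by the orthomodular law in the form $a=b\wedge(a\vee b')$ applied to $a=x\wedge y\leq x=b$. For (R4), rewriting the right-hand side yields $(xy+1)x+1=(x\wedge y)\vee x'$, while the left-hand side is $(x\wedge y)+x'$; since $x\wedge y\leq x=(x')'$, the pair $x\wedge y$, $x'$ is orthogonal, so (c) gives $(x\wedge y)+x'=(x\wedge y)\vee x'$, which establishes (R4).

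For the implication from left to right, Theorem~2.1 of \cite{DL21} already supplies that $\mathbb L(\mathbf R)$ is an orthomodular lattice, (a) is just (R1), and (b) holds by the definition $x'=x+1$. The only substantive step is (c). Given $x\leq y'$, I would instantiate (R4) with $x$ replaced by $y'$ and $y$ replaced by $x$, obtaining $y'x+(y'+1)=(y'x+1)y'+1$. Here $y'x=y'\wedge x=x$ because $x\leq y'$, and $y'+1=(y')'=y$, so the left-hand side is $x+y$; the right-hand side rewrites via (b) and De Morgan as $(x'\wedge y')'=x\vee y$. Thus $x+y=x\vee y$, which is exactly (c). The main obstacle is conceptual rather than computational: one must recognize that (R4) and (c) are the two faces of a single statement, so that the whole theorem reduces to the De Morgan/orthomodular rewriting of (R2)--(R4) together with this one involutive instantiation; care is needed only in selecting the correct form of the orthomodular law and in keeping the substitution $x\mapsto y'$, $y\mapsto x$ consistent.
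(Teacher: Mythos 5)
There is nothing in the paper to compare your argument against: Theorem~\ref{th0} is imported wholesale from \cite{DL21} (hence the ``cf.'' in its statement), and the paper never proves it; the only related fact it records beforehand is that $\mathbb L(\mathbf R)$ is an orthomodular lattice whenever $\mathbf R$ is an RLSE, again by citation. So your proposal can only be judged on its own merits, and it is correct. In the right-to-left direction your three computations are sound: (R2) reduces via $z+1=z'$ and the definition of $\vee$ to absorption; (R3) reduces to the orthomodular law in the form $a\le b\Rightarrow b\wedge(a\vee b')=a$, which is indeed equivalent to the usual form $a\le b\Rightarrow b=a\vee(b\wedge a')$ (apply the latter to $b'\le a'$ and take complements); and (R4) is exactly condition (c) applied to the orthogonal pair $x\wedge y\le(x')'$. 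You should add the one-line observation that $(R,\cdot,0,1)$ is a bounded meet-semilattice, which the definition of an RLSE also requires; this is immediate because $\cdot$ coincides with the meet of the orthomodular lattice $\mathbb L(\mathbf R)$ with bounds $0$ and $1$. In the left-to-right direction your derivation of (c) is valid: instantiating (R4) at the pair $(y+1,x)$, using $x(y+1)=x$ from $x\le y'$, $(y+1)+1=y$ (justified by the involution of the orthomodular lattice already in hand, or directly from (R2) with $y=1$), and the definition of $\vee$ gives $x+y=x\vee y$. The one caveat is that this direction is not self-contained: the substantive half, that the RLSE axioms force $\mathbb L(\mathbf R)$ to be orthomodular, is delegated to Theorem~2.1 of \cite{DL21} --- but that is exactly what the paper itself does in the text preceding the theorem, so it is acceptable in context. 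In effect your proof isolates and correctly establishes the content of the theorem that goes beyond the cited fact, namely that over an orthomodular $\mathbb L(\mathbf R)$ the axiom (R4) is equivalent to condition (c).
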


Further, by means of the lattice structure of RLSEs one can easily see:

\begin{proposition}\label{prop1}
{\rm(}cf.\ {\rm\cite{DL21})} An {\rm RLSE} $\mathbf R=(R,+,\cdot,0,1)$ has the following properties for all $x,y\in R$:
\begin{enumerate}[{\rm(i)}]
\item $(x+1)+1=x$,
\item $x(x+1)=0$, and as a consequence $1+1=0$,
\item $x+0=x$,
\item $x+(x+1)=1$,
\item $x\le y$ if and only if $y+1\le x+1$,
\item $x\perp y$ implies $(x+y)+1=(x+1)(y+1)$,
\item $x+y=x\vee y$ if $x\leq y'$,
\item $x+x=0$ if $\mathbf R$ is specific.
\end{enumerate}
\end{proposition}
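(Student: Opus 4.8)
The plan is to lean entirely on Theorem~\ref{th0}, which identifies $\mathbb L(\mathbf R)=(R,\vee,\wedge,{}',0,1)$ as an orthomodular lattice with $x'=x+1$, $x\wedge y=xy$ and $x\vee y=(x+1)(y+1)+1$, and which records that $+$ is commutative (a), agrees with complementation on the right unit (b), and coincides with the join whenever the arguments are orthogonal (c). Once the additive notation is rewritten through this dictionary, every claim turns into a standard identity of (ortho)complemented lattices, so no genuinely ring-theoretic computation is needed; the substance is carried by Theorem~\ref{th0}, and the real task is merely to recognize the translations.

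Concretely, I would argue the items as follows. For (i), $(x+1)+1=(x')'=x$ by the involutivity of the orthocomplementation. For (ii), $x(x+1)=x\wedge x'=0$; setting $x=1$ and using that $1$ is the top of the meet-semilattice yields $1+1=1\cdot(1+1)=0$. For (iii), apply (c) with $y=0$: since $0'=1\ge x$ we have $x\le 0'$, whence $x+0=x\vee 0=x$. For (iv), apply (c) with $y=x+1=x'$: the hypothesis $x\le(x')'=x$ holds trivially, so $x+(x+1)=x\vee x'=1$. For (v), note that the RLSE order $x\le y\Leftrightarrow xy=x$ is exactly the lattice order $x\wedge y=x$, and then $x\le y\Leftrightarrow y'\le x'$ is the antitonicity of the complement, i.e.\ $y+1\le x+1$. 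Item (vii) is literally condition (c), since $x\le y'$ is the orthogonality hypothesis. Finally (vi) follows from (vii) together with (b) and De~Morgan: if $x\perp y$ then $(x+y)+1=(x\vee y)'=x'\wedge y'=(x+1)(y+1)$.

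The only item invoking the extra hypothesis is (viii). Here I would use (R5) with $y=x$ to get $x+x=x(x+1)+(x+1)x$; both summands vanish by (ii) and the commutativity of the meet, so $x+x=0+0$, which equals $0$ by the instance $x=0$ of (iii). The one point to watch is the logical ordering of the deductions: (ii) must be in place before the consequence $1+1=0$ and before (viii), and (iii) before (viii), so I would present the items in the stated sequence to avoid any circularity. Beyond this bookkeeping there is no real obstacle.
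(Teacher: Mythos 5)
Your proof is correct and follows essentially the same route as the paper: the paper gives no separate computation for Proposition~\ref{prop1}, but justifies it precisely ``by means of the lattice structure of RLSEs,'' i.e.\ by translating everything through Theorem~\ref{th0} into orthomodular-lattice identities, which is exactly what you do (including the direct use of (R5) for item (viii)). Your version merely makes explicit the translations the paper leaves to the reader, and the logical ordering you flag (establishing (ii) and (iii) before (viii)) is sound.
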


Recalling that two elements $x,y$ of an ortholattice are said to {\em commute} (abbreviated by $x\mathrel{\C}y$) if $(x\wedge y)\vee (x\wedge y')= x$ and that $c(x,y):=(x\wedge y)\vee(x\wedge y')\vee(x'\wedge y)\vee(x'\wedge y')$ is called the {\em commutator} of $x$ and $y$, we define analogous concepts for RLSEs $(R,+,\cdot,0,1)$: We say that the elements $x,y$ of $R$ {\em commute} (also indicated by $x\mathrel{\C}y$) if $xy+x(y+1)=x$ and we call the element $c(x,y):=\big(xy+x(y+1)\big)+\big((x+1)y+(x+1)(y+1)\big)$ the {\em commutator} of $x$ and $y$. That these definitions are justified is asserted by the following proposition.

\begin{proposition}\label{prop2}
Let $\mathbf R=(R,+,\cdot,0,1)$ be an {\rm RLSE} and $a,b\in R$. Then the following hold:
\begin{enumerate}
\item[\rm(ix)] $a\mathrel{\C}b$ in $\mathbf R$ if and only if $a\mathrel{\C}b$ in $\mathbb L(\mathbf R)$,
\item[\rm(x)] The commutator of $a$ and $b$ in $\mathbf R$ coincides with the commutator of $a$ and $b$ in $\mathbb L(\mathbf R)$.
\end{enumerate}
\end{proposition}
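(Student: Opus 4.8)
The plan is to push everything through the lattice $\mathbb L(\mathbf R)$ by means of the defining equations $x\wedge y=xy$ and $x'=x+1$, and then to observe that \emph{every} ring-sum occurring in the two definitions is secretly a sum of orthogonal elements. Once that is established, Proposition~\ref{prop1}(vii) (equivalently Theorem~\ref{th0}(c)) converts each such sum into the corresponding join, and both assertions collapse into term-by-term identities between the ring-like and the lattice-theoretic expressions.

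First I would record the orthogonality facts on which the whole argument rests. Writing $A:=ab$, $B:=a(b+1)$, $C:=(a+1)b$, and $D:=(a+1)(b+1)$, so that in $\mathbb L(\mathbf R)$ we have $A=a\wedge b$, $B=a\wedge b'$, $C=a'\wedge b$, and $D=a'\wedge b'$, I would observe three things. First, since $A\le b$ and $B\le b'$ we get $A\le b\le B'$, whence $A\perp B$. Second, in the same way $C\le b$ and $D\le b'$ give $C\perp D$. Third, since $A\vee B\le a$ and $C\vee D\le a'$ we get $A\vee B\le a\le(C\vee D)'$, whence $(A\vee B)\perp(C\vee D)$. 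Each of these is immediate from the fact that the meet in $\mathbb L(\mathbf R)$ is just the ring multiplication, together with the bookkeeping that $A,B$ lie below $a$ while $C,D$ lie below $a'$.

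With the first fact in hand, Proposition~\ref{prop1}(vii) gives $A+B=A\vee B$, that is, $ab+a(b+1)=(a\wedge b)\vee(a\wedge b')$. Now the left-hand side is precisely the defining expression for $a\mathrel{\C}b$ in $\mathbf R$, and the right-hand side is precisely the defining expression for $a\mathrel{\C}b$ in $\mathbb L(\mathbf R)$; since both commuting conditions assert that this common element equals $a$, assertion (ix) follows at once. For (x) I would apply Proposition~\ref{prop1}(vii) three times, taking care to respect the bracketing in the definition of the commutator: by the first two orthogonality facts the inner sums collapse to $A+B=A\vee B$ and $C+D=C\vee D$, and then by the third fact the outer sum yields $(A\vee B)+(C\vee D)=(A\vee B)\vee(C\vee D)=A\vee B\vee C\vee D$, which is exactly the lattice commutator $c(a,b)$.

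The one point that requires genuine care—and which I would flag as the main pitfall rather than a deep obstacle—is that $+$ need not be associative in a general RLSE, so one is not free to rearrange the four summands of the commutator. This causes no trouble here precisely because the definition already groups the terms into the two orthogonal pairs $(A,B)$ and $(C,D)$, and these two joins in turn form the orthogonal pair $(A\vee B,\,C\vee D)$. Thus each of the three invocations of Proposition~\ref{prop1}(vii) is applied to an explicitly orthogonal pair, and no reassociation of $+$ is ever needed; the bracketing chosen in the definition of $c(a,b)$ is exactly the one that makes the computation go through.
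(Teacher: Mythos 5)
Your proof is correct and takes essentially the same approach as the paper: both establish the orthogonality of the pairs $(a\wedge b,\,a\wedge b')$, $(a'\wedge b,\,a'\wedge b')$, and of the two resulting joins, and then apply Proposition~\ref{prop1}(vii) to convert each sum of orthogonal elements into the corresponding join. Your explicit remark that the bracketing in the definition of $c(a,b)$ makes associativity of $+$ unnecessary is a point the paper leaves implicit, but the underlying argument is identical.
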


\begin{proof}
\
\begin{enumerate}
\item[(ix)] Since $a\wedge b\perp a\wedge b'$ we have $(a\wedge b)\vee(a\wedge b')=(a\wedge b)+(a\wedge b')=ab+a(b+1)$. Now $a\mathrel{\C}b$ in $\mathbb L(\mathbf R)$ if and only if $(a\wedge b)\vee(a\wedge b')=a$.
\item[(x)] From (vii) of Proposition~\ref{prop1} we know that $(a\wedge b)\vee(a\wedge b')=ab+a(b+1)$. Replacing $a$ by $a'$ we obtain $(a'\wedge b)\vee(a'\wedge b')=(a+1)b+(a+1)(b+1)$. Since $(a\wedge b)\vee(a\wedge b')\perp(a'\wedge b)\vee(a'\wedge b')$ we then have
\begin{align*}
& (a\wedge b)\vee(a\wedge b')\vee(a'\wedge b)\vee(a'\wedge b')= \\
& =\big((a\wedge b)\vee(a\wedge b')\big)\vee\big((a'\wedge b)\vee(a'\wedge b')\big)= \\
& =\big((a\wedge b)\vee(a\wedge b')\big)+\big((a'\wedge b)\vee(a'\wedge b')\big)= \\
& =(ab+a(b+1))+\big((a+1)b+(a+1)(a+1)\big).
\end{align*}
\end{enumerate}
\end{proof}

For every algebra $\mathbf L=(L,\vee,\wedge,{}',0,1)$ of type $(2,2,1,0,0)$ let $\mathbb R(\mathbf L)$ denote the algebra $(L,+,\cdot,0,1)$ of type $(2,2,0,0)$ defined by
\begin{align*}
x+y & :=(x\wedge y')\vee(x'\wedge y), \\
 xy & :=x\wedge y
\end{align*}
for all $x,y\in L$.

\begin{theorem}\label{th2}
Let $\mathbf R$ be a specific {\rm RLSE} and $\mathbf L$ an orthomodular lattice. Then the following hold:
\begin{enumerate}[{\rm(i)}]
\item $\mathbb L(\mathbf R)$ is an orthomodular lattice,
\item $\mathbb R(\mathbf L)$ is a specific {\rm RLSE},
\item $\mathbb R\big(\mathbb L(\mathbf R)\big)=\mathbf R$,
\item $\mathbb L\big(\mathbb R(\mathbf L)\big)=\mathbf L$.
\end{enumerate}
\end{theorem}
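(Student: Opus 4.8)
The plan is to establish the four statements in the order (i), (iv), (ii), (iii), since they build on one another. Statement (i) is immediate: a specific RLSE is in particular an RLSE, so $\mathbb L(\mathbf R)$ is an orthomodular lattice by the result of \cite{DL21} recalled before Theorem~\ref{th0}. The recurring technical fact underlying the remaining three parts is this: in any orthomodular lattice the elements $x\wedge y'$ and $x'\wedge y$ are orthogonal, because $x\wedge y'\le x\le x\vee y'=(x'\wedge y)'$. Consequently, by Proposition~\ref{prop1}(vii), their lattice join may always be rewritten as the ring sum $(x\wedge y')+(x'\wedge y)$. This single observation is what links the lattice join to the operation $+$ in both directions.

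First I would prove (iv) by a direct computation showing that the three operations of $\mathbb L\big(\mathbb R(\mathbf L)\big)$ coincide with those of $\mathbf L$. Working inside $\mathbb R(\mathbf L)$ one checks $x+1=(x\wedge 1')\vee(x'\wedge 1)=x'$, whence the derived complement is $x'$ and the derived meet is $xy=x\wedge y$. For the derived join one computes $(x+1)(y+1)+1=(x'\wedge y')+1=(x'\wedge y')'=x\vee y$ by De Morgan, so all operations agree and $\mathbb L\big(\mathbb R(\mathbf L)\big)=\mathbf L$. Next, for (ii), I would appeal to Theorem~\ref{th0}. By (iv) the associated lattice $\mathbb L\big(\mathbb R(\mathbf L)\big)$ equals $\mathbf L$ and is therefore orthomodular, so it remains to verify conditions (a)--(c) for the operation $+$ of $\mathbb R(\mathbf L)$. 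Commutativity (a) is clear from the symmetry of $(x\wedge y')\vee(x'\wedge y)$; condition (b) is the identity $x+1=x'$ already obtained in (iv); and for (c), if $x\le y'$ then $x\wedge y'=x$ and $x'\wedge y=y$, so $x+y=x\vee y$. Theorem~\ref{th0} then yields that $\mathbb R(\mathbf L)$ is an RLSE. To see it is specific I would verify (R5): the right-hand side $x(y+1)+(x+1)y$ equals $(x\wedge y')+(x'\wedge y)$, and by the orthogonality fact above this equals $(x\wedge y')\vee(x'\wedge y)=x+y$.

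Finally, for (iii), I would compute the operations of $\mathbb R\big(\mathbb L(\mathbf R)\big)$ directly. The product is $x\wedge_{\mathbb L(\mathbf R)}y=xy$, which already matches $\mathbf R$. The sum is $(x\wedge y')\vee(x'\wedge y)=\big(x(y+1)\big)\vee\big((x+1)y\big)$, and since these two terms are orthogonal, the join equals $x(y+1)+(x+1)y$ by Proposition~\ref{prop1}(vii). Here specificity enters decisively: axiom (R5) identifies this last expression with $x+y$, so $\mathbb R\big(\mathbb L(\mathbf R)\big)=\mathbf R$.

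I expect the only real subtlety to be bookkeeping: one must keep careful track of which structure each $\vee$, $\wedge$, ${}'$ and $+$ lives in, and recognise that axiom (R5) is exactly what is needed to close the round-trip in (iii). This is precisely why the correspondence holds for specific RLSEs and can fail for general ones, in agreement with the remark preceding Theorem~\ref{th0} that $\mathbb L(\mathbf R)$ may be Boolean without $\mathbf R$ being a Boolean ring.
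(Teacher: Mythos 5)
Your proposal is correct and follows essentially the same route as the paper: (i) via Theorem~\ref{th0}, (iv) by direct computation of the derived operations, (ii) via Theorem~\ref{th0} together with (iv) by checking conditions (a)--(c), and (iii) by combining the orthogonality of $x\wedge y'$ and $x'\wedge y$ with Proposition~\ref{prop1}(vii) and axiom (R5). If anything, your treatment of (ii) is slightly more complete than the paper's, since you explicitly verify (R5) for $\mathbb R(\mathbf L)$, a step the paper leaves implicit.
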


\begin{proof}
Let
\begin{align*}
                              \mathbf R & =(R,+,\cdot,0,1), \\
                   \mathbb L(\mathbf R) & =(R,\vee,\wedge,{}',0,1), \\
\mathbb R\big(\mathbb L(\mathbf R)\big) & =(R,\oplus,\odot,0,1), \\
                              \mathbf L & =(L,\vee,\wedge,{}',0,1), \\
                   \mathbb R(\mathbf L) & =(L,+,\cdot,0,1), \\
\mathbb R\big(\mathbb L(\mathbf R)\big) & =(L,\cup,\cap,{}^*,0,1).
\end{align*}
\begin{enumerate}
\item[(i)] follows from Theorem~\ref{th0}.
\item[(iii)] Using Proposition~\ref{prop1} (vii) we get
\begin{align*}
x\oplus y & \approx(x\wedge y')\vee(x'\wedge y)\approx x(y+1)+(x+1)y\approx x+y, \\
 x\odot y & \approx x\wedge y\approx xy.
\end{align*}
\item[(iv)] We have
\begin{align*}
    x+1 & \approx(x\wedge1')\vee(x'\wedge1)\approx x', \\
x\cup y & \approx(x+1)(y+1)+1\approx(x'\wedge y')'\approx x\vee y, \\
x\cap y & \approx xy\approx x\wedge y, \\
    x^* & \approx x+1\approx x'.
\end{align*}
\item[(ii)] follows from Theorem~\ref{th0} and (iv) since for all $x,y\in L$\begin{enumerate}[(a)]
\item $x+y=(x\wedge y')\vee(x'\wedge y)=(y\wedge x')\vee(y'\wedge x)=y+x$,
\item $x+1=(x\wedge1')\vee(x'\wedge1)=x'$,
\item $x+y=(x\wedge y')\vee(x'\wedge y)=x\vee y$ if $x\leq y'$.
\end{enumerate}
\end{enumerate}
\end{proof}

\begin{corollary}
For fixed base set $A$, the mappings $\mathbb L$ and $\mathbb R$ are mutually inverse bijections between the set of all specific {\rm RLSEs} over $A$ and the set of all orthomodular lattices over $A$.
\end{corollary}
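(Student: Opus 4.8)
The plan is to derive the corollary directly from the four parts of Theorem~\ref{th2}, since those parts already contain all the substantive content. First I would fix the base set $A$ and introduce notation for the two collections in question: let $\mathcal{S}$ denote the set of all specific RLSEs whose carrier is $A$, and let $\mathcal{O}$ denote the set of all orthomodular lattices whose carrier is $A$.

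The key observation is that both constructions $\mathbb{L}$ and $\mathbb{R}$ leave the carrier untouched: by their very definitions, $\mathbb{L}(\mathbf{R})$ has the same underlying set as $\mathbf{R}$ and $\mathbb{R}(\mathbf{L})$ has the same underlying set as $\mathbf{L}$. Hence, restricted to algebras over $A$, they produce algebras over $A$. Combining this with Theorem~\ref{th2}(i) and (ii) shows that $\mathbb{L}$ restricts to a well-defined map $\mathcal{S}\to\mathcal{O}$ and $\mathbb{R}$ to a well-defined map $\mathcal{O}\to\mathcal{S}$.

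Next I would invoke Theorem~\ref{th2}(iii) and (iv). Part (iii), which states $\mathbb{R}(\mathbb{L}(\mathbf{R}))=\mathbf{R}$ for every specific RLSE $\mathbf{R}$, says exactly that $\mathbb{R}\circ\mathbb{L}$ is the identity on $\mathcal{S}$; part (iv), which states $\mathbb{L}(\mathbb{R}(\mathbf{L}))=\mathbf{L}$ for every orthomodular lattice $\mathbf{L}$, says that $\mathbb{L}\circ\mathbb{R}$ is the identity on $\mathcal{O}$. A map possessing a two-sided inverse is a bijection, so $\mathbb{L}$ and $\mathbb{R}$ are mutually inverse bijections between $\mathcal{S}$ and $\mathcal{O}$, which is precisely the assertion of the corollary.

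I do not expect any genuine obstacle here: all the real work---establishing that the constructions land in the correct class and that the two round trips recover the original algebra---has already been discharged in Theorem~\ref{th2}. The only point requiring care is purely bookkeeping: one must read the identities in (iii) and (iv) as equalities of algebras with the \emph{same} fixed carrier $A$, not merely up to isomorphism, so that the resulting correspondence is an honest bijection of sets rather than of isomorphism classes. This is automatic from the constructions, since neither $\mathbb{L}$ nor $\mathbb{R}$ alters the carrier.
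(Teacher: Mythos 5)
Your proof is correct and is exactly the argument the paper intends: the corollary is stated without proof as an immediate consequence of Theorem~\ref{th2}, with parts (i) and (ii) giving well-definedness of the two maps on the respective classes over $A$ and parts (iii) and (iv) giving that the composites are identities. Your added remark that both constructions preserve the carrier set, so the equalities are genuine equalities of algebras over $A$, is the right bookkeeping point and matches the paper's phrasing ``for fixed base set $A$.''
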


\begin{corollary}\label{cor1}
For a specific {\rm RLSE} $\mathbf R$ the associated orthomodular lattice $\mathbb L(\mathbf R)$ is a Boolean algebra if and only if $\mathbf R$ is a Boolean ring.
\end{corollary}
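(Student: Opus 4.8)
The plan is to reduce both implications to the classical correspondence between Boolean rings and Boolean algebras, exploiting the bijection furnished by Theorem~\ref{th2}. The central observation is that, for a specific RLSE, one has $\mathbf R=\mathbb R\big(\mathbb L(\mathbf R)\big)$ by Theorem~\ref{th2}(iii), so the ring operations of $\mathbf R$ are literally the symmetric difference $x+y=(x\wedge y')\vee(x'\wedge y)$ and the meet $xy=x\wedge y$ computed in the orthomodular lattice $\mathbb L(\mathbf R)$. Moreover, since $\mathbf R$ is specific, Proposition~\ref{prop1}(viii) gives $x+x=0$, so $\mathbf R$ is of characteristic $2$, and the semilattice reduct $(R,\cdot,0,1)$ already supplies a commutative, associative, idempotent multiplication with unit $1$.

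For the implication that $\mathbf R$ being a Boolean ring forces $\mathbb L(\mathbf R)$ to be Boolean, I would compute the join directly from the definition. Expanding $x\vee y=(x+1)(y+1)+1$ using ring distributivity and $1+1=0$ yields $x\vee y=xy+x+y$, which is exactly the join of the Boolean algebra classically associated to a Boolean ring; together with $x\wedge y=xy$ and $x'=x+1$ this identifies $\mathbb L(\mathbf R)$ with that Boolean algebra, so it is distributive and hence Boolean.

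For the converse I would use that when $\mathbb L(\mathbf R)$ is a Boolean algebra, its symmetric difference is associative and its meet distributes over it, which is precisely the standard construction sending a Boolean algebra to its Boolean ring. Since $\mathbf R=\mathbb R\big(\mathbb L(\mathbf R)\big)$ realises exactly these operations, the additive reduct $(R,+,0)$ becomes an abelian group (commutativity from (R1), neutral element from Proposition~\ref{prop1}(iii), self-inverse from characteristic $2$), associativity of $+$ and the distributive law $x(y+z)=xy+xz$ hold, and $xx=x$ follows from idempotency of the meet. Thus $\mathbf R$ is a commutative ring satisfying $x^2=x$, i.e.\ a Boolean ring.

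The step I expect to carry the weight is the converse: the properties distinguishing a ring from a mere RLSE, namely associativity of $+$ and distributivity of $\cdot$ over $+$, are exactly the identities that fail for the symmetric difference in a non-distributive orthomodular lattice, so everything hinges on invoking the distributivity of the Boolean algebra $\mathbb L(\mathbf R)$ at that point. Once distributivity is available, these laws reduce to the familiar Boolean-algebra computations with symmetric difference and meet.
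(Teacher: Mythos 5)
Your proof is correct and follows essentially the same route the paper intends: Corollary~\ref{cor1} is stated without proof precisely because it is immediate from Theorem~\ref{th2} (in particular $\mathbf R=\mathbb R\big(\mathbb L(\mathbf R)\big)$ for specific RLSEs) combined with the classical one-to-one correspondence between Boolean rings and Boolean algebras, which is exactly the reduction you carry out. Your explicit verification that the formulas $x\vee y=(x+1)(y+1)+1=xy+x+y$ and $x+y=(x\wedge y')\vee(x'\wedge y)$ match the Stone correspondence, with specificity invoked only where it is genuinely needed (the converse direction), fills in the details the paper leaves to the reader.
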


\begin{corollary}\label{cor2}
For a specific {\rm RLSE} $(R,+,\cdot,0,1)$ the condition $x(y+1)=xy+x$ for some $x,y\in R$ is equivalent to $x\mathrel{\C}y$.
\end{corollary}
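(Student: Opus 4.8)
The plan is to reduce the ring identity to a purely lattice-theoretic condition inside $\mathbb L(\mathbf R)$ and then invoke Proposition~\ref{prop2}~(ix). Since $\mathbf R$ is specific, Theorem~\ref{th2}~(iii) (equivalently Proposition~\ref{prop1}~(vii) together with (viii)) shows that $+$ is exactly the symmetric difference of the orthomodular lattice, i.e.\ $u+v=(u\wedge v')\vee(u'\wedge v)$ for all $u,v\in R$. I would use this representation from the very start, because the tempting shortcut of ``adding $xy$ to both sides'' must be resisted: $+$ is not associative in a general (non-Boolean) orthomodular lattice, so the slick algebraic cancellation is unavailable and the whole computation has to be carried out in the lattice.

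First I would rewrite both sides of $x(y+1)=xy+x$ in lattice terms. Since $\cdot=\wedge$ and $y+1=y'$, the left-hand side is $x(y+1)=x\wedge y'$. For the right-hand side I apply the symmetric-difference formula with first argument $xy=x\wedge y$ and second argument $x$:
\begin{align*}
xy+x &=\big((x\wedge y)\wedge x'\big)\vee\big((x\wedge y)'\wedge x\big)\\
     &=\big(x\wedge x'\wedge y\big)\vee\big((x'\vee y')\wedge x\big)\\
     &=0\vee\big(x\wedge(x'\vee y')\big)=x\wedge(x'\vee y').
\end{align*}
Hence $x(y+1)=xy+x$ is equivalent to the lattice equation $x\wedge y'=x\wedge(x'\vee y')$.

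It then remains to prove the lattice-theoretic equivalence
$$x\wedge y'=x\wedge(x'\vee y')\quad\Longleftrightarrow\quad x\mathrel{\C}y\ \text{ in }\mathbb L(\mathbf R),$$
and this is the step I expect to carry the weight of the argument, since orthomodularity enters exactly here. Because $x\wedge y\le x$, the orthomodular law gives $x=(x\wedge y)\vee\big(x\wedge(x\wedge y)'\big)=(x\wedge y)\vee\big(x\wedge(x'\vee y')\big)$. If the displayed equation holds, substituting $x\wedge(x'\vee y')=x\wedge y'$ yields $x=(x\wedge y)\vee(x\wedge y')$, which is precisely $x\mathrel{\C}y$. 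Conversely, if $x\mathrel{\C}y$, then $x$ commutes with both $x'$ and $y'$, so by the Foulis--Holland theorem $\{x,x',y'\}$ generates a distributive sublattice and $x\wedge(x'\vee y')=(x\wedge x')\vee(x\wedge y')=x\wedge y'$.

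Finally I would transfer the conclusion back to $\mathbf R$: by Proposition~\ref{prop2}~(ix), $x\mathrel{\C}y$ holds in $\mathbb L(\mathbf R)$ if and only if it holds in $\mathbf R$, closing the chain of equivalences and proving the corollary. The only genuine obstacle is the middle lattice equivalence; once one notices that the ring equation collapses to $x\wedge y'=x\wedge(x'\vee y')$, the result follows from the orthomodular law in one direction and from Foulis--Holland distributivity in the other.
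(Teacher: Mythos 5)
Your proposal is correct, and its skeleton is the same as the paper's: both reduce the ring identity $x(y+1)=xy+x$ to the lattice equation $x\wedge y'=x\wedge(x'\vee y')$ in $\mathbb L(\mathbf R)$, using that $+$ is the orthomodular symmetric difference in a specific RLSE, and then identify that lattice equation with commuting. The difference lies in how the identification is handled: the paper simply cites Kalmbach's monograph \cite{K} for the fact that $x\wedge(x'\vee y')=x\wedge y'$ is equivalent to $x\mathrel{\C}y'$ (and hence to $x\mathrel{\C}y$), whereas you prove it yourself --- the orthomodular law applied to $x\wedge y\le x$ gives the implication from the equation to $x\mathrel{\C}y$, and the Foulis--Holland theorem (using $x\mathrel{\C}x'$ and $x\mathrel{\C}y'$) gives the converse. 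Both directions of your lattice argument are sound, and the final transfer back to $\mathbf R$ via Proposition~\ref{prop2}~(ix) is exactly what the paper's definition of $\mathrel{\C}$ in an RLSE requires, so your version buys a self-contained proof where the paper's is a one-line citation. One small quibble: your parenthetical claim that Proposition~\ref{prop1}~(vii) together with (viii) yields the symmetric-difference representation is slightly off --- what is actually needed is identity (R5) combined with (vii), i.e.\ the orthogonality of $x\wedge y'$ and $x'\wedge y$ --- but this is immaterial, since Theorem~\ref{th2}~(iii), which you cite first, already suffices.
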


\begin{proof}
The equation $x(y+1)=xy+x$ is equivalent to $x\wedge(x'\vee y')=x\wedge y'$ which according to results in \cite K is equivalent to $x\mathrel{\C}y'$ and hence to $x\mathrel{\C}y$.
\end{proof}

\begin{corollary}\label{cor3}
A specific {\rm RLSE} $(R,+,\cdot,0,1)$ is a Boolean ring if and only if it satisfies the identity $x(y+1)\approx xy+x$.
\end{corollary}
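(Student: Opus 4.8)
The plan is to read the claimed equivalence off the results already established, since Corollary~\ref{cor2} converts this distributivity-type identity into a commutation condition and Corollary~\ref{cor1} converts Booleanness of the lattice into the Boolean-ring property. The only genuinely lattice-theoretic ingredient will be the classical fact that an orthomodular lattice in which every pair of elements commutes is a Boolean algebra.

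First I would dispatch the easy direction. If $\mathbf R$ is a Boolean ring, then multiplication distributes over addition, so $x(y+1)=xy+x\cdot1=xy+x$ for all $x,y\in R$; thus the identity holds automatically and no RLSE-specific reasoning is needed here.

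For the converse, assume $x(y+1)\approx xy+x$ holds identically. By Corollary~\ref{cor2}, for each fixed pair $x,y\in R$ the equation $x(y+1)=xy+x$ is equivalent to $x\mathrel{\C}y$; since the identity holds for all $x,y$, every pair of elements of $\mathbf R$ commutes. By Proposition~\ref{prop2}(ix) this is the same as saying that every pair commutes in the orthomodular lattice $\mathbb L(\mathbf R)$. Invoking the standard characterization (see \cite K) that an orthomodular lattice all of whose pairs commute is distributive, and hence a Boolean algebra, we conclude that $\mathbb L(\mathbf R)$ is a Boolean algebra.

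Finally, Corollary~\ref{cor1} states that for a specific \textrm{RLSE} the lattice $\mathbb L(\mathbf R)$ is a Boolean algebra exactly when $\mathbf R$ is a Boolean ring, so $\mathbf R$ is indeed a Boolean ring and the equivalence follows. I expect the main (though still routine) obstacle to be the citation or verification of the orthomodular-lattice fact that universal commutativity forces distributivity; everything else is a direct substitution into the two corollaries.
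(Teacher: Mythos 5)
Your proof is correct and follows essentially the same route as the paper: Corollary~\ref{cor2} to translate the identity into universal commutation, Kalmbach's fact that an orthomodular lattice in which all pairs commute is a Boolean algebra, and Corollary~\ref{cor1} to pass between Booleanness of $\mathbb L(\mathbf R)$ and the Boolean-ring property of $\mathbf R$. The paper's proof is just a terser version of this same chain (stated as a string of equivalences, so your separate treatment of the easy direction via distributivity is subsumed), and the lattice-theoretic fact you flag as the main obstacle is exactly what the paper cites from Kalmbach rather than proves.
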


\begin{proof}
This follows from Corollary~\ref{cor2} and from the fact that an orthomodular lattice $(L,\vee,\wedge,{}',0,1)$ is a Boolean algebra if and only if $x\mathrel{\C}y$ for all $x,y\in L$ (cf.\ \cite K).
\end{proof}

\section{Structure theory of RLSEs}

\begin{definition}\label{def2}
An {\rm RLSE} $(R,+,\cdot,0,1)$ is called {\em weakly distributive} if it satisfies the identity
\begin{enumerate}
\item[{\rm(R6)}] $(xy+1)x\approx xy+x$.
\end{enumerate}
\end{definition}

Obviously, any specific RLSE $(R,+,\cdot,0,1)$ is weakly distributive, because according to Proposition~\ref{prop1} (ii) and (R5) we have
\[
(xy+1)x\approx xy(x+1)+(xy+1)x\approx xy+x.
\]
Moreover, any weakly distributive RLSE is of characteristic $2$ since
\[
x+x\approx x1+x\approx(x1+1)x\approx(x+1)x\approx0
\]
according to Proposition~\ref{prop1} (ii).

\begin{example}
The specific {\rm RLSE} corresponding to the orthomodular lattice $\mathbf{MO}_2$ is weak\-ly distributive.
\end{example}

According to Corollary~\ref{cor1} this example shows that in general a specific RLSE $\mathbf R$ is not a Boolean ring.

The next theorem explains how the initially introduced axioms for RLSEs can be rephrased by weakening the customary axioms of associativity and distributivity known from Boolean rings in case of weakly distributive RLSEs.
 
\begin{theorem}\label{th4}
Let $\mathbf R=(R,+,\cdot,0,1)$ be an algebra of type $(2,2,0,0)$ such that $(R,\cdot,0,1)$ is a bounded meet-semilattice. Then the following are equivalent:
\begin{enumerate}[{\rm(i)}]
\item $\mathbf R$ is a weakly distributive {\rm RLSE},
\item $\mathbf R$ satisfies the following identities:
\begin{enumerate}[{\rm(W1)}]
\item $0+1\approx 1$,
\item $x+y\approx y+x$,
\item $(xy+x)+1\approx xy+(x+1)$,
\item $(xy+x)+x\approx xy+(x+x)$,
\item $(xy+1)x\approx xy+x$,
\item $(xy+1)(x+1)\approx xy(x+1)+(x+1)$.
\end{enumerate}
\end{enumerate}
\end{theorem}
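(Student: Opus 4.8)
The plan is to prove the two implications separately, using the orthomodular lattice $\mathbb L(\mathbf R)$ for the forward direction and a purely equational argument for the converse. For (i)$\,\Rightarrow\,$(ii) I would work inside $\mathbb L(\mathbf R)$, which is an orthomodular lattice by Theorem~\ref{th0}, translating each identity via $x+1=x'$, $xy=x\wedge y$ and Proposition~\ref{prop1}(vii) (sum $=$ join for orthogonal elements). Then (W2) is (R1) and (W5) is (R6); (W1) is $0+1=1+0=1$ by Proposition~\ref{prop1}(iii); and (W6) holds in \emph{every} RLSE, since its left side is $(xy)'\wedge x'=(x'\vee y')\wedge x'=x'$ by absorption while its right side is $(x\wedge y\wedge x')+x'=0+x'=x'$. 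For (W3) and (W4) I would first use (R6) to rewrite the non-orthogonal sum $xy+x$ as the meet $(xy+1)x=(xy)'\wedge x=:w$, which is orthogonal to $xy$ with $xy\vee w=x$; then $(xy+x)+1=w'=xy\vee x'$ equals $xy+(x+1)=xy\vee x'$ (the latter by orthogonality of $xy$ and $x'$), giving (W3), while $(xy+x)+x=w+x=w'\wedge x=(xy\vee x')\wedge x=xy$ by the Foulis--Holland theorem ($x$ commutes with both $xy$ and $x'$), and $xy+(x+x)=xy$ by characteristic~$2$, giving (W4).

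For the converse (ii)$\,\Rightarrow\,$(i) I would derive (R1)--(R4) and (R6). Again (R1) is (W2) and (R6) is (W5). The easy preparatory steps are $0+x=x$ (specialise (W5) at $y=0$ and use (W1), (W2)) and the observation that (W5) at $y=x$ yields $x+x=(x+1)x=x(x+1)$. Granting characteristic~$2$ and the involution $(x+1)+1\approx x$ (see below), the remaining axioms follow quickly: (R4) is $(xy+1)x+1=(xy+x)+1=xy+(x+1)$ by (W5) then (W3); (R3) is $((xy+1)x+1)x=(xy+1)x+x=(xy+x)+x=xy+(x+x)=xy$ by (W5) twice, then (W4) and characteristic~$2$ (using that $(xy+1)x\le x$, so (W5) applies to the pair $(x,(xy+1)x)$); and (R2) is $(xy+1)(x+1)+1=xy(x+1)+\big((x+1)+1\big)=0+x=x$ by (W6), the associativity instance (W3) applied with first argument $x+1$, the complement law $xy(x+1)\le x(x+1)=0$, and the involution.

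The hard part is exactly the deferred step: extracting characteristic~$2$ (equivalently the complement law $x(x+1)\approx0$) together with the involution $(x+1)+1\approx x$ from the \emph{weakened} instances (W3)--(W6), since without the full RLSE structure one cannot yet invoke Proposition~\ref{prop1}. The subtlety is that the meet-semilattice alone never forces $x\wedge(x+1)=0$, so this must come from $+$. My plan here is to exploit (W5) and (W6) as the two distributivity instances and (W3), (W4) as the two associativity instances to show first that $p:=x+x=x(x+1)$ behaves as a partial additive identity (for instance (W6) at $y=1$ gives $p+(x+1)=x+1$, while (W3)-substitutions give $(p+x)+1=x+1$ and $p+\big((x+1)+1\big)=(x+1)+1$), and then to collapse these relations so that $p=0$ and $(x+1)+1=x$ simultaneously. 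Should a direct collapse prove elusive, the fallback is to build the ortholattice reduct with $x':=x+1$ and $x\vee y:=(x+1)(y+1)+1$, verify orthomodularity, and conclude via Theorem~\ref{th0}; but this route still hinges on first proving the involution and complement laws, so that identity is the genuine crux in either approach.
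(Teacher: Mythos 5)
Your forward direction is correct: your lattice-theoretic justifications of (W3), (W4), (W6) inside $\mathbb L(\mathbf R)$ are sound, though the paper argues purely equationally (it gets (W6) from (R2) via the involution, and (W4) from (R6) and (R3)). Likewise your conditional derivations of (R2)--(R4) from (W1)--(W6) \emph{plus} characteristic $2$, the complement law $x(x+1)\approx0$ and the involution $(x+1)+1\approx x$ are correct and essentially the paper's. The genuine gap is the one you yourself flag: you never derive characteristic $2$ and the involution from the six identities; you offer only a plan and a fallback, both admittedly open. As submitted, the proposal is not a proof.

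However, your diagnosis that this step is the crux is more accurate than you may realize: the implication (ii)$\Rightarrow$(i) is \emph{false} with (W4) as printed, so no completion of your plan exists. Take any bounded lattice with more than one element and define $x+y:=x\vee y$, $xy:=x\wedge y$. By absorption $xy+x=x$, so in (W3), (W5), (W6) both sides collapse (to $1$, $x$, $1$ respectively), both sides of (W4) equal $x$, and (W1), (W2) are clear; yet $x+x=x$ is not $0$ and (R2) fails at $x=0$, so this is no RLSE. The paper's own proof of the converse is erroneous at exactly your deferred step: it claims that putting $y=0$ in (W4) yields $x+x\approx0$, but once $0+x\approx x$ is available (from (W5), (W1), (W2)), the instance $y=0$ of (W4) reads $(0+x)+x\approx0+(x+x)$, i.e.\ $x+x\approx x+x$, a tautology. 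What the paper's forward direction actually establishes is the stronger identity $(xy+x)+x\approx xy$, which it then pads to $xy+(x+x)$ using characteristic $2$. If (W4) is restated in that stronger form, the crux evaporates: $y=0$ gives $x+x\approx0$, $x=1$ gives $(y+1)+1\approx y$, your observation that (W5) at $y=x$ gives $x+x=x(x+1)$ yields the complement law, and your derivations of (R2)--(R4) then finish the converse. So your strategy is the right one, but it can only succeed after correcting (W4); as stated, the equivalence itself is the problem, not your proof.
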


\begin{proof}
$\text{}$ \\
(i) $\Rightarrow$ (ii):
\begin{enumerate}[(W1)]
\item follows from Proposition~\ref{prop1} (iii) and (R1).
\item equals (R1).
\item We have
\[
(xy+x)+1\approx(xy+1)x+1\approx xy+(x+1)
\]
according to (R6) and (R4).
\item We find
\[
(xy+x)+x\approx(xy+1)x+x\approx\big((xy+1)x+1\big)x\approx xy\approx xy+0\approx xy+(x+x)
\]
according to (R6), (R3), Proposition~\ref{prop1} (iii) and the fact that every weakly distributive RLSE is of characteristic $2$.
\item equals (R6).
\item We notice
\[
(xy+1)(x+1)\approx\big((xy+1)(x+1)+1\big)+1\approx x+1\approx0+(x+1)\approx xy(x+1)+(x+1)
\]
according to Proposition~\ref{prop1} (i), (R2), Proposition~\ref{prop1} (iii), (R1) and Proposition~\ref{prop1} (ii).
\end{enumerate}
(ii) $\Rightarrow$ (i): \\
Putting $y=0$ in (W5) and using (W1) and (W2) yields $x+0\approx0+x\approx x$. \\
Putting $y=0$ in (W4) yields $x+x\approx0$. \\
Setting $y=1$ in (W5) we get $x(x+1)\approx0$. \\
Setting $x=1$ in (W4) we obtain $(y+1)+1\approx y$. \\
In the sequel we often use these identities without mentioning them.
\begin{enumerate}
\item[(R1)] equals (W2).
\item[(R2)] We have
\[
(xy+1)(x+1)+1\approx\big(xy(x+1)+(x+1)\big)+1\approx\big(0+(x+1)\big)+1\approx(x+1)+1\approx x
\]
due to (W6).
\item[(R3)] We get
\[
\big((xy+1)x+1\big)x\approx(xy+1)x+x\approx(xy+x)+x\approx xy
\]
according to (W5) and (W4).
\item[(R4)] We see that
\[
xy+(x+1)\approx(xy+x)+1\approx(xy+1)x+1
\]
accordingly to (W3) and (W5).
\item[(R6)] equals (W5).
\end{enumerate}
\end{proof}

Recalling that for an RLSE $\mathbf R=(R,+,\cdot,0,1)$ $a\leq b$ for $a,b\in R$ means that $ab=a$, a notion which coincides with $a\leq b$ for the associated lattice $\mathbb L(\mathbf R)$, we have $\{(xy,x)\mid x,y\in R\}=\{(x,y)\in R^2\mid x\leq y\}$ and we can now rephrase Theorem~\ref{th4} as follows:

\begin{theorem}\label{th5}
Let $\mathbf R=(R,+,\cdot,0,1)$ be an algebra of type $(2,2,0,0)$ such that $(R,\cdot,0,1)$ is a bounded meet-semilattice. Then the following are equivalent:
\begin{enumerate}[{\rm(i)}]
\item $\mathbf R$ is a weakly distributive {\rm RLSE},
\item $\mathbf R$ satisfies the following identities and conditions:
\begin{enumerate}[{\rm(1)}]
\item $0+1\approx 1$,
\item $x+y\approx y+x$,
\item if $x\leq y$ then $(x+y)+1=x+(y+1)$,
\item if $x\leq y$ then $(x+y)+y=x+(y+y)$,
\item if $x\leq y$ then $(x+1)y=x+y$,
\item if $x\leq y$ then $(x+1)(y+1)=x(y+1)+(y+1)$.
\end{enumerate}
\end{enumerate}
\end{theorem}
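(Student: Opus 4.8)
The plan is to derive Theorem~\ref{th5} as a direct reformulation of Theorem~\ref{th4}, using nothing beyond the set identity
\[
\{(xy,x)\mid x,y\in R\}=\{(x,y)\in R^2\mid x\leq y\}
\]
recorded just before the statement. By Theorem~\ref{th4}, condition (i) is equivalent to $\mathbf R$ satisfying (W1)--(W6), so it suffices to show that, over a bounded meet-semilattice $(R,\cdot,0,1)$, the system (W1)--(W6) and the system (1)--(6) cut out the same class of algebras. Since (W1) and (W2) are literally (1) and (2), only the correspondence between (W3)--(W6) and (3)--(6) has to be established.

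First I would justify the set equality itself. The inclusion from left to right holds because $(R,\cdot,0,1)$ is a meet-semilattice, so $xy=x\wedge y\leq x$ for all $x,y$; the reverse inclusion holds because $a\leq b$ means $ab=a$, whence $(a,b)=(ba,b)$ displays $(a,b)$ in the form $(xy,x)$ with $x=b$ and $y=a$. The crucial point is then that each of (W3)--(W6) is a term identity in which $x$ and $y$ occur only through the combination $(xy,x)$: writing $a:=xy$ and $b:=x$, identity (W3) becomes $(a+b)+1=a+(b+1)$, (W4) becomes $(a+b)+b=a+(b+b)$, (W5) becomes $(a+1)b=a+b$, and (W6) becomes $(a+1)(b+1)=a(b+1)+(b+1)$, which are exactly the conclusions of (3), (4), (5) and (6).

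With this in hand the equivalence follows in both directions. If $\mathbf R$ satisfies (W3)--(W6) for all $x,y$, then for an arbitrary pair with $a\leq b$ the choice $x=b$, $y=a$ gives $(xy,x)=(a,b)$, and the corresponding instance of each (W)-identity delivers the conclusion of the matching condition (3)--(6); conversely, because $(xy,x)$ always satisfies $xy\leq x$, applying each of (3)--(6) to the pair $(xy,x)$ reproduces the full identities (W3)--(W6). Combined with Theorem~\ref{th4}, this yields the stated equivalence of (i) and (ii).

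The step I would be most careful about is precisely this ``same data'' observation: one must confirm that no (W)-identity secretly requires independent access to $x$ and $y$. The only place where this is not immediately transparent is (W6), where the subterm $xy(x+1)$ must be recognised as $a(b+1)$ rather than as an expression depending separately on $x$ and $y$. Once this is verified the translation is purely mechanical and requires no further computation, so I do not expect a genuine obstacle beyond this bookkeeping.
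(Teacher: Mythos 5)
Your proposal is correct and is exactly the paper's own argument: the paper proves Theorem~\ref{th5} by the very observation you make, namely that $\{(xy,x)\mid x,y\in R\}=\{(a,b)\in R^2\mid a\leq b\}$ and that each of (W3)--(W6) involves $x,y$ only through the pair $(xy,x)$, so Theorem~\ref{th5} is a mechanical rephrasing of Theorem~\ref{th4}. Your explicit verification of the two inclusions and of the translation of each identity (including the subterm $xy(x+1)=a(b+1)$ in (W6)) simply fills in the bookkeeping the paper leaves implicit.
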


The identities (W3) and (W4) in Theorem~\ref{th4} are special cases of associativity. A further version of associativity is the following.

\begin{definition}
{\rm(}cf.\ {\rm\cite{DL21})} An {\rm RLSE} $(R,+,\cdot,0,1)$ is called {\em weakly associative} if it satisfies the identity
\begin{enumerate}
\item[{\rm(R7)}] $(x+y)+1\approx x+(y+1)$.
\end{enumerate}
\end{definition}

Of course, every Boolean ring is weakly associative. The converse does not hold. The RLSE $\mathbf R$ of characteristic $2$ with $\mathbb L(\mathbf R)=\mathbf{MO}_2$, $a+b=a'+b'=c$ and $a+b'=a'+b=c'$ for $a\neq b$, $0<a,b<1$ and an arbitrary $c\in{\rm MO}_2$ is weakly associative, but not a Boolean ring. As shown in \cite{DL21} a weakly associative specific RLSE is a Boolean ring. We now note that a weakly associative RLSE is weakly distributive (and hence of characteristic $2$) since
\[
(xy+1)x\approx\big((xy+1)x+1\big)+1\approx\big(xy+(x+1)\big)+1\approx\big((xy+x)+1\big)+1\approx xy+x
\]
according to Proposition~\ref{prop1} (i), (R4) and weak associativity. The converse does not hold. The specific RLSE corresponding to the orthomodular lattice $\mathbf{MO}_2$ is weakly distributive, but not weakly associative since for two incomparable elements $a$ and $b$
\begin{align*}
(a+b)+1 & =\big((a\wedge b')\vee(a'\wedge b)\big)'=(0\vee0)'=0'=1\neq0=0\vee0=(a\wedge b)\vee(a'\wedge b')= \\
        & =a+b'=a+(b+1).
\end{align*}

We close this section with a purely algebraic remark about the structure of RLSEs.

Let $\mathbf A=(A,F)$ be an algebra. Then by $\Con\mathbf A$ we denote the set of all congruences on $\mathbf A$ and by $\BCon\mathbf A=(\Con\mathbf A,\subseteq)$ the congruence lattice of $\mathbf A$. The algebra $\mathbf A$ is called
\begin{itemize}
\item {\em congruence permutable} if $\Theta\circ\Phi=\Phi\circ\Theta$ for all $\Theta,\Phi\in\Con\mathbf A$,
\item {\em congruence distributive} if $\BCon\mathbf A$ is distributive,
\item {\em arithmetical} if it is both congruence permutable and congruence distributive,
\item {\em congruence regular} if for all $a\in A$ and $\Theta,\Phi\in\Con\mathbf A$, $[a]\Theta=[a]\Phi$ implies $\Theta=\Phi$,
\item {\em congruence uniform} if for every $\Theta\in\Con\mathbf A$ all classes of $\Theta$ have the same cardinality.
\end{itemize}

\begin{remark}
{\rm RLSEs} are arithmetical, congruence regular and congruence uniform.
\end{remark}

\begin{proof}
Let $\mathbf R$ be an RLSE. Since the fundamental operations of $\mathbb L(\mathbf R)$ are terms in $\mathbf R$ we have $\Con\mathbf R\subseteq\Con\mathbb L(\mathbf R)$. Now the theorem follows from the fact (see e.g.\ \cite{CEL}) that orthomodular lattices are arithmetical, congruence regular and congruence uniform.
\end{proof}

\section{Algebras of numerical events}

Let $S$ be a set of states of a physical system and $p(s)$ the probability of the occurrence of an event when the system is in state $s\in S$. The function $p$ from $S$ to $[0,1]$ is called a {\em numerical event}, or more precisely, an {\em S-probability} (cf.\ \cite{BM91} and \cite{BM93}).

Let $P$ be a set of S-probabilities including the constant functions $0$ and $1$. We denote the order of real functions by $\leq$, write $p':=1-p$ for the counter probability of $p\in P$ and $p\perp q$ if $p$ and $q$ are orthogonal in $P$, i.e.\ $p\leq q'$. If the infimum or supremum of $p,q\in P$ exists in $P$, we denote this by $p\wedge q$ and $p\vee q$, respectively. Finally we agree to write $p+q$, $p-q$ and $pq$ for the sum, difference and product of functions $p,q\in P$. Not to mix up the sum and product of functions with the sum and product within RLSEs, we will use with RLSEs $\oplus$ and $\odot$ instead of $+$ and $\cdot$, respectively.

\begin{definition}{\rm(}cf.\ {\rm\cite{BM91})}\label{def3}
A set $P$ of S-probabilities is called an {\em algebra of S-probabilities} if
\begin{enumerate}[{\rm(S1)}]
\item $0,1\in P$,
\item $p'\in P$ for every $p\in P$,
\item if $p\perp q\perp r\perp p$ for $p,q,r\in P$ then $p+q+r\in P$.
\end{enumerate}
\end{definition}

Putting $r=0$ in axiom (S3) one obtains that $p\perp q$ implies $p+q\in P$ in which case one can show that $p+q=p\vee q$ (in respect to the order $\leq$ of the functions of $P$).

In general, $(P,\vee,\wedge,{}',0,1)$ is an orthomodular poset in respect to the partial order of functions, but from now on we will assume with good cause that $P$ is a lattice.

That an algebra of S-probabilities is a lattice and hence an orthomodular lattice, is a typical feature of many quantum logics. In particular, every Hilbert-space logic can be considered as a lattice-ordered algebra of S-probabilities (cf.\ \cite{BM93}), and in the important case that $|S|=2$ every algebra of S-probabilities is a lattice (cf.\ \cite{DDL10a}). Of course, also all classical logics whose order $\leq$ correspond to Boolean algebras (cf.\ \cite{MT})can be understood as lattice-ordered algebras of numerical events.

If measurements are available in the context of a set $P_n$ of numerical events it is often crucial to get to know whether one deals with a classical situation or a quantum-mechanical one which means that one has to decide whether $P_n$ can be embedded into an algebra of S-probabilities $P$ in such a way that the elements of $P_n$ lie within a Boolean subalgebra of $P$. If this is the case, $P_n$ is called {\em Boolean embeddable}, or for short, only {\em embeddable} (cf.\ \cite{DLM20}).

Let $P$ be a lattice-ordered algebra of S-probabilities and
\begin{align*}
p\oplus q & :=(p\wedge q')\vee(p'\wedge q), \\
 p\odot q & :=p\wedge q
\end{align*}
for all $p,q\in P$. Then, as shown in \cite{DL21}, $\mathbf R=(P,\oplus,\odot,0,1)$ is an RLSE with $\mathbb L(\mathbf R)=P$. We call $\mathbf R$ the {\em {\rm RLSE} associated to P}. $\mathbf R$ has characteristic $2$ and is weakly distributive, because $\mathbf R$ obviously satisfies identity (R5). If a set $P_n$ of numerical events is Boolean embeddable into $P$ we will also say that it is Boolean embeddable into $\mathbf R$.

Next we express $\oplus$, $\odot$ and $\leq$ of RLSEs associated to algebras of S-probabilities by the sum, difference and $\leq$ of real functions.

\begin{proposition}\label{prop3}
Let $\mathbf R$ be the RLSE associated to a lattice-ordered algebra of S-prob\-a\-bil\-i\-ties $P$ and $p,q\in P$. Then
\begin{enumerate}[{\rm(i)}]
\item $p\oplus q=p\odot(1-q)+(1-p)\odot q$,
\item $p\oplus q=q-p$ if $p\leq q$,
\item $p\oplus1=1-p$,
\item $p\oplus q=p+q$ if $p\perp q$.
\end{enumerate}
\end{proposition}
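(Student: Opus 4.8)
The plan is to reduce all four identities to a single structural fact already recorded in the text: if $a,b\in P$ are orthogonal, then the lattice supremum $a\vee b$ coincides with the pointwise sum $a+b$ of the two functions. This is the only point at which functional (pointwise) arithmetic enters the argument; everything else is carried out inside the orthomodular lattice $P=\mathbb L(\mathbf R)$ using the definitions $p\oplus q=(p\wedge q')\vee(p'\wedge q)$ and $p\odot q=p\wedge q$. I would begin by recording the elementary translations $q'=1-q$ and $p\odot(1-q)=p\wedge q'$, together with the orthogonality $p\wedge q'\perp p'\wedge q$, which holds since $(p'\wedge q)'=p\vee q'\ge p\ge p\wedge q'$.

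For (i), because $p\wedge q'$ and $p'\wedge q$ are orthogonal, the supremum defining $p\oplus q$ is their pointwise sum, so $p\oplus q=(p\wedge q')+(p'\wedge q)$; rewriting $p\wedge q'=p\odot(1-q)$ and $p'\wedge q=(1-p)\odot q$ yields the claim. Identity (iii) is then immediate from the definition, since $p\oplus1=(p\wedge1')\vee(p'\wedge1)=0\vee p'=p'=1-p$. For (iv), the hypothesis $p\perp q$ gives $p\le q'$ and $q\le p'$, hence $p\wedge q'=p$ and $p'\wedge q=q$, so that $p\oplus q=p\vee q$; as $p\perp q$, the orthogonal-sum fact turns this supremum into the pointwise sum $p+q$.

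The main work is (ii), where the subtlety is that the lattice meet $p'\wedge q$ in $P$ need not equal the pointwise minimum of the functions $1-p$ and $q$, so it cannot simply be evaluated pointwise. Assuming $p\le q$, I would first note $p\wedge q'\le q\wedge q'=0$, which collapses the first join term and gives $p\oplus q=p'\wedge q=q\wedge p'$. To identify $q\wedge p'$ with the function $q-p$, I would invoke the orthomodular law in $P$: from $p\le q$ it produces the decomposition $q=p\vee(q\wedge p')$ with $p\perp(q\wedge p')$. Applying the orthogonal-sum fact to this decomposition yields $q=p+(q\wedge p')$ as functions, whence $q\wedge p'=q-p$, finishing the proof.

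I expect the orthomodular-law step in (ii) to be the only genuine obstacle. It is what guarantees that the abstractly defined meet $q\wedge p'$ really coincides with the pointwise difference $q-p$, and it is precisely where orthomodularity is indispensable, since neither pointwise reasoning nor distributivity is available in the general lattice-ordered algebra of S-probabilities.
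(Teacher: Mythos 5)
Your proof is correct, and for parts (i), (iii), and (iv) it follows essentially the same lines as the paper (the paper derives (iii) from (ii) rather than directly from the definition, but that is immaterial). The genuine difference is in (ii), which is indeed the only nontrivial item. The paper disposes of it by citation: it invokes Theorem~\ref{th5} (condition (5), giving $p\oplus q=(p\oplus1)\odot q=p'\wedge q$ for $p\le q$) together with Proposition~2.1 of \cite{DDL10a}, which supplies the identification $q\wedge p'=q-p$ in an algebra of S-probabilities. You instead prove this identification from scratch: you collapse $p\wedge q'$ to $0$ directly from $p\le q$, then apply the orthomodular law to get $q=p\vee(q\wedge p')$ with $p\perp q\wedge p'$, and convert that supremum into the pointwise sum $p+(q\wedge p')$ via the orthogonal-sum fact (the remark following Definition~\ref{def3}), whence $q\wedge p'=q-p$. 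This is a clean, self-contained argument that makes explicit exactly where orthomodularity is used, and it keeps the whole proposition internal to the paper rather than outsourcing the key functional identity to an external reference; the paper's version is shorter but leaves the reader to consult \cite{DDL10a} for the substance of (ii). Both routes rest on the same two pillars --- the lattice-theoretic definition of $\oplus$ and the fact that orthogonal suprema are pointwise sums --- so the proofs are morally equivalent, but yours is the more complete exposition.
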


\begin{proof}
\
\begin{enumerate}[(i)]
\item holds since the operations $\odot$ and $\wedge$ coincide, $p'=1-p$ and $(p\odot q')\perp(p'\odot q)$.
\item is a consequence of Theorem \ref{th5} and Proposition 2.1 in \cite{DDL10a}.
\item follows from (ii).
\item results from Proposition~\ref{prop1} (vii).
\end{enumerate}
\end{proof}

When we say that a set $P_n$ of S-probabilities is Boolean embeddable into an RLSE $\mathbf R=(R,\oplus,\odot,0,1)$ we assume that there exists an (arbitrary) lattice-ordered algebra $P$ of S-probabilities such that $P=\mathbb L(\mathbf R)$. If the elements of $P_n$ can only have two values, namely $0$ and $1$, then we also assume this for the elements of $P$. Such an algebra of S-probabilities then is a so-called concrete logic, that is a quantum logic which can be represented by sets.

\begin{theorem}\label{th6}
Let $P_2=\{p,q\}$. Then the following holds:
\begin{enumerate}[{\rm(i)}]
\item $P_2$ is Boolean embeddable if and only if $p\odot(1-q)=p-p\odot q$.
\item If $p$ and $q$ are two-valued then $P_2$ is Boolean if and only if $p\odot q=pq$.
\end{enumerate}
\end{theorem}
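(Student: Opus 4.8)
The plan is to reduce both parts to the single fact that the two-element set $\{p,q\}$ is Boolean embeddable precisely when $p$ and $q$ \emph{commute} in the orthomodular lattice $P=\mathbb L(\mathbf R)$. For the forward direction I would argue that if $\{p,q\}$ sits inside a Boolean subalgebra of some lattice-ordered algebra of S-probabilities, then $p\mathrel{\C}q$ there (in a Boolean algebra all pairs commute), and commuting, being a lattice-theoretic property, is preserved under the embedding; hence $p\mathrel{\C}q$ already holds in $P$. For the converse I would invoke the standard orthomodular-lattice fact (cf.\ \cite{K}) that two commuting elements generate a distributive, hence Boolean, subalgebra of $P$, so $\{p,q\}$ is embeddable into $P$ itself.

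For part (i) I would then apply Corollary~\ref{cor2} to the associated RLSE $\mathbf R=(P,\oplus,\odot,0,1)$, which is specific since it satisfies (R5): the corollary reads $p\mathrel{\C}q$ if and only if $p\odot(q\oplus1)=(p\odot q)\oplus p$. It then remains to rewrite the two sides by means of Proposition~\ref{prop3}. Using (iii) gives $q\oplus1=1-q$, so the left-hand side becomes $p\odot(1-q)$; since $p\odot q=p\wedge q\leq p$, part (ii) of that proposition yields $(p\odot q)\oplus p=p-p\odot q$. Combining these two identities with the embeddability--commuting equivalence above produces exactly the stated criterion $p\odot(1-q)=p-p\odot q$.

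For part (ii) I would work in the concrete logic, where by assumption every element of $P$ is two-valued and may be identified with a subset of $S$; write $A,B$ for the supports of $p,q$. The meet $p\odot q=p\wedge q$ is the largest element of $P$ below $A\cap B$, so one always has $p\wedge q\leq pq$ pointwise, with equality exactly when $pq=A\cap B$ belongs to $P$; thus $p\odot q=pq$ is equivalent to $A\cap B\in P$. On the other hand, a short orthomodularity argument shows that in a concrete logic $A\mathrel{\C}B$ holds if and only if the intersections $A\cap B$, $A\cap B^c$, $A^c\cap B$, $A^c\cap B^c$ all lie in $P$, and that each of these memberships follows from $A\cap B\in P$ alone (from $A\cap B\in P$ one recovers $A\cap B^c=A\wedge(A\cap B)'\in P$ by orthomodularity, and symmetrically for the remaining pieces). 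Hence $p\mathrel{\C}q$ is equivalent to $p\odot q=pq$, and combining this with the embeddability--commuting equivalence gives the claim.

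The main obstacle I anticipate is the justification of the embeddability--commuting equivalence in the first paragraph, in particular the converse: one must be certain that ``Boolean embeddable'' for the pair $\{p,q\}$ really amounts to commuting in $P$ rather than to some strictly weaker or stronger condition, and that the subalgebra generated by two commuting elements of an orthomodular lattice is genuinely Boolean. Once this is secured, both parts are essentially bookkeeping with Proposition~\ref{prop3} and, for (ii), the elementary observation that the pointwise product of two two-valued events coincides with their lattice meet precisely when it stays inside the concrete logic.
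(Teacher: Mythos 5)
Your part (i) coincides with the paper's proof: both reduce Boolean embeddability of $\{p,q\}$ to $p\mathrel{\C}q$ in $\mathbb L(\mathbf R)$, apply Corollary~\ref{cor2} (the associated RLSE being specific since it satisfies (R5)), and translate $p\odot(q\oplus1)=p\odot q\oplus p$ into $p\odot(1-q)=p-p\odot q$ via Proposition~\ref{prop3}; you merely spell out the embeddability--commuting equivalence that the paper asserts without proof (it is the $n=2$ case of the result of \cite{DL14} quoted right after the theorem), and your justification of it is sound. In part (ii), however, you take a genuinely different route. The paper stays inside the RLSE/function calculus: it uses $p\mathrel{\C}q\Leftrightarrow p\mathrel{\C}q'$ together with part (i) to obtain the criterion $p\odot q=p-p\odot(1-q)$, notes that inside a Boolean subalgebra the meet of two-valued functions is the pointwise minimum, i.e.\ the product $pq$, and for the converse verifies the criterion of (i) by pure function arithmetic: $p\odot q\oplus p=p-p\odot q=p-pq=p(1-q)=p\odot(1-q)$. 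You instead argue set-theoretically in the concrete logic: $p\odot q=pq$ iff $A\cap B\in P$, and $A\cap B\in P$ iff $p\mathrel{\C}q$. This is correct, but one ingredient should be made explicit: orthomodularity alone only yields $A=(A\cap B)\vee\big(A\wedge(A\cap B)'\big)$; to conclude that the second joinand equals the set $A\cap B^c$ (and, in the forward direction, that $A\wedge B$ equals the set $A\cap B$) you also need that in an algebra of S-probabilities the join of orthogonal elements is their sum, hence for two-valued events a disjoint union --- a fact recorded after Definition~\ref{def3}. With that supplied, your argument is complete. The trade-off: your version gives a structural explanation of part (ii) (commuting amounts to set-theoretic compatibility in a concrete logic) and does not need part (i) at all, whereas the paper's version is shorter and keeps both parts running through the same Corollary~\ref{cor2}/Proposition~\ref{prop3} machinery.
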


\begin{proof}
\
\begin{enumerate}[(i)]
\item $p$, $q$ are Boolean embeddable into an orthomodular lattice if and only if $p\mathrel{\C}q$. According to Corollary~\ref{cor2} this is equivalent to $p\odot(q\oplus1)=p\odot q\oplus p$ which by Proposition~\ref{prop3} means $p\odot(1-q)=p-p\odot q$.
\item We assume that $p$ and $q$ can only have the values $0$ and $1$. For orthomodular lattices $p\mathrel{\C}q$ is equivalent to $p\mathrel{\C}q'$, hence by what we have already proved $p$ and $q$ are Boolean embeddable if and only if $p\odot q=p-p\odot(1-q)$. Within a Boolean subalgebra $p(s)\wedge q(s)=\min\big(p(s),q(s)\big)=p(s)q(s)$ with $\min$ short for minimum. Hence $pq$ must be the element $p\odot q$ of $\mathbf R$. Conversely, if $pq=p\odot q$ then $p\odot q\oplus p=p-p\odot q=p-pq=p(1-q)=p\odot(1-q)$ since $p(1-q)$ is an element of $\mathbf R$ that coincides with $p\wedge(1-q)$.
\end{enumerate}
\end{proof} 

Let $A$ be a finite subset of an RLSE $\mathbf R$. We denote by $\prod_{\bf R}A$ the product in $\mathbf R$ of all elements of $A$ and by $\bigwedge A$ the infimum of these elements in $\mathbb L(\mathbf R)$. Moreover, we will denote the product within the reals of all functions belonging to $A$ by $\prod A$ and the set-theoretic union of $A$ and $B$ by $A\cup B$. As proven in \cite{DL14} for $n>1$ an $n$-element subset $T$ of an orthomodular poset is Boolean embeddable if and only if $\bigwedge A$ and $\bigwedge B$ commute for every $k\in\{1,\ldots,n-1\}$ and every $k$-element subset $A$ and $B$ of $T$. Taking this into account one can derive from Theorem~\ref{th6} a rough procedure to find out whether a set $P_n=\{p_1,\ldots,p_n\}$ of S-probabilities is Boolean embeddable, namely

For $k=1$ to $n-1$: \\
Check for every $k$-element subsets $A$ and $B$ of $P_n$ whether $\prod_{\bf R}A\odot(1-\prod_{\bf R}B)=\prod_{\bf R}A-\prod_{\bf R}A\odot\prod_{\bf R}B$, or rather whether $\prod_{\bf R}A\odot\prod_{\bf R}B=\prod(A\cup B)$, if all S-probabilities are two-valued.

We conclude this paragraph by weakening two former concepts.

\begin{definition}
Omitting axioms {\rm(R3)} and {\rm(R4)} in the definition of an {\rm RLSE} we will call the structure arising this way a {\em near-RLSE}, and if a {\em near-RLSE} satisfies axiom {\rm(R5)} we will call it {\em specific}. Moreover, substituting axiom {\rm(S3)} in the definition of algebras of S-probabilities by its special case $r=0$, i.e.\ if $p\perp q$ for $p,q\in P$ then $p+q\in P$, we obtain a so-called {\em generalized field of events} {\rm(GFE)} {\rm(}cf.\ {\rm\cite D)}.
\end{definition}

If $\mathbf R=(R,\oplus,\odot,0,1)$ is a near-RLSE then $\mathbb L(\mathbf R)$ is a lattice with an antitone involution $'$ that in general is not a complementation. Such a lattice could be considered as a quantum logic, however, we will now focus on a different approach to near-RLSEs.

We consider a set $Q$ of S-probabilities containing $0$ and $1$ endowed by the operations $\oplus$ and $\odot$ defined for $p,q\in Q$ by 
\begin{align*}
(p\oplus q)(s) & :=\max\big(p(s),q(s)\big)-\min\big(p(s),q(s)\big), \\
 (p\odot q)(s) & :=\min\big(p(s),q(s)\big)
\end{align*}
for all $s\in S$ with $\min$ and $\max$ having the obvious meanings.

As for these operations one could think of repeating an experiment several times for the same states $s\in S$, $p\oplus q$ giving the bandwidth between the lowest and highest values of two repetitions $p$ and $q$, $p\odot q$ the obtained lowest values and $p\oplus1=1-p$ providing the counter probability to $p$.

In the proof of the next theorem we use the following two lemmas.

\begin{lemma}\label{lem1}
Let $p,q\in Q$ and $s\in S$ and assume $p(s),q(s)\in\{0,1\}$. Then
\begin{align*}
p(s)\oplus q(s) & =p(s)+q(s)-2p(s)q(s), \\
 p(s)\odot q(s) & =p(s)q(s).
\end{align*}
\end{lemma}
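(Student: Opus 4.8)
The plan is to reduce both identities to two elementary facts about the minimum and maximum of real numbers, one of which is special to Boolean arguments. Throughout I would write $a:=p(s)$ and $b:=q(s)$, so that by hypothesis $a,b\in\{0,1\}$, and recall that by the definition of the operations on $Q$ one has $p(s)\odot q(s)=\min(a,b)$ and $p(s)\oplus q(s)=\max(a,b)-\min(a,b)$.

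First I would settle the $\odot$-identity, which is the simpler one. I claim that $\min(a,b)=ab$ whenever $a,b\in\{0,1\}$. Indeed, if either argument is $0$ then both $\min(a,b)$ and $ab$ vanish, while if $a=b=1$ then $\min(a,b)=1=ab$; this is precisely the statement that on Boolean values the minimum coincides with the product (the logical conjunction). Note that this step genuinely uses $a,b\in\{0,1\}$, since $\min(a,b)=ab$ fails for general reals.

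For the $\oplus$-identity I would then invoke the relation $\min(a,b)+\max(a,b)=a+b$, which holds for all real numbers $a,b$ and requires no restriction whatsoever. Rearranging gives $\max(a,b)=a+b-\min(a,b)$, and substituting the value $\min(a,b)=ab$ obtained above yields
\[
p(s)\oplus q(s)=\max(a,b)-\min(a,b)=\big(a+b-ab\big)-ab=a+b-2ab=p(s)+q(s)-2p(s)q(s),
\]
as required.

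No step presents a genuine obstacle: the argument is a short computation, and the only point demanding attention is to keep track of which fact is valid for all reals (the additive relation $\min+\max=a+b$) and which is special to $\{0,1\}$ (the multiplicative coincidence $\min=ab$). Should a fully elementary presentation be preferred, the same conclusion follows at once by tabulating the four cases $(a,b)\in\{0,1\}^2$ and comparing both sides of each identity directly.
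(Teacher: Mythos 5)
Your proof is correct. It differs in presentation from the paper's, which consists of a single sentence: ``Consider the four cases $\big(p(s),q(s)\big)\in\{0,1\}^2$,'' i.e.\ a direct tabulation --- exactly the alternative you mention in your closing remark. Your main argument instead factors the computation through two structural facts: the identity $\min(a,b)+\max(a,b)=a+b$, valid for all reals, and the coincidence $\min(a,b)=ab$, which holds only on $\{0,1\}$. This buys a little more than the paper's check: it isolates precisely where the two-valuedness hypothesis enters, which is pertinent here, since the surrounding theorem (part (iii) of the final theorem) shows that $Q\subseteq\{0,1\}^S$ is exactly the condition separating these $\mathbf Q$ from genuine RLSEs and Boolean rings; your decomposition makes visible that the failure for general values lies entirely in $\min\neq ab$. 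The paper's four-case check is shorter and needs no auxiliary identities; yours is marginally longer but more informative. Either is acceptable for a lemma of this size.
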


\begin{proof}
Consider the four cases $\big(p(s),q(s)\big)\in\{0,1\}^2$.
\end{proof}

\begin{lemma}\label{lem2}
Put
\begin{align*}
x\oplus y & :=\max(x,y)-\min(x,y), \\
 x\odot y & :=\min(x,y)
\end{align*}
for all $x,y\in[0,1]$ and let $a,b\in[0,1]$. Then the following holds:
\begin{enumerate}[{\rm(i)}]
\item $a\oplus b=b\oplus a$,
\item $(a\odot b\oplus1)\odot(a\oplus1)\oplus1=a$,
\item $\big((a\odot b\oplus1)\odot a\oplus1\big)\odot a=a\odot b$ if and only if $b\geq\min(a,1-a)$,
\item $a\odot b\oplus(a\oplus1)=(a\odot b\oplus1)\odot a\oplus1$ if and only if $a\in\{0,1\}$ or $b=0$,
\item $a\oplus b=a\odot(b\oplus1)\oplus(a\oplus1)\odot b$.
\end{enumerate}
\end{lemma}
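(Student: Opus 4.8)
The plan is to translate every expression into the elementary operations on $[0,1]$: since $a\odot b=\min(a,b)$ and $a\oplus b=\max(a,b)-\min(a,b)=|a-b|$, and in particular $a\oplus1=1-a$ because $a\le1$, the whole lemma becomes a collection of identities and inequalities about $\min$, $\max$ and $|\cdot|$. I would also record the De~Morgan-type rules $1-\min(x,y)=\max(1-x,1-y)$ and $1-\max(x,y)=\min(1-x,1-y)$. With these in hand, (i) is immediate from $|a-b|=|b-a|$.

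For (ii) I would evaluate the left-hand side from the inside out: $a\odot b\oplus1=1-\min(a,b)$, and since $\min(a,b)\le a$ forces $1-\min(a,b)\ge1-a$, the next meet collapses to $(a\odot b\oplus1)\odot(a\oplus1)=\min\big(1-\min(a,b),\,1-a\big)=1-a$, so applying $\oplus1$ once more returns $a$. For (v) I would split on the sign of $a+b-1$: if $a+b\le1$ then $\min(a,1-b)=a$ and $\min(1-a,b)=b$, whereas if $a+b\ge1$ then $\min(a,1-b)=1-b$ and $\min(1-a,b)=1-a$; in either case the right-hand side simplifies to $|a-b|=a\oplus b$.

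The two conditional statements (iii) and (iv) need genuine case analysis. For (iv), writing $m=\min(a,b)$, the left-hand side is $m\oplus(1-a)=|m+a-1|$ while, after one application of the De~Morgan rule, the right-hand side is $1-\min(1-m,a)=\max(m,1-a)$. Splitting on whether $m\ge1-a$ or $m<1-a$ reduces the desired equation to $a=1$ in the first case and to $m=0$ in the second; since $m=0$ means $a=0$ or $b=0$, the two cases together give exactly the condition ``$a\in\{0,1\}$ or $b=0$''.

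The heart of the lemma, and the step I expect to be the main obstacle, is (iii), whose left-hand side is the triply nested expression $\min\big(1-\min(1-\min(a,b),a),\,a\big)$. I would organise the computation by the position of $b$ relative to $a$ and to $1-a$. When $b\ge a$ the inner meet gives $(a\odot b\oplus1)\odot a=\min(1-a,a)$ and the outer operations return $a=a\odot b$, so the identity holds. When $a>b\ge1-a$ (which forces $a>1/2$) the inner meet equals $1-b$ and the whole expression collapses to $b=a\odot b$, so the identity again holds. Finally, when $b<\min(a,1-a)$ the expression evaluates to $\min(a,1-a)>b=a\odot b$ and therefore fails. Collecting the three cases shows that the identity holds precisely when $b\ge\min(a,1-a)$, as claimed. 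The delicate point is to verify that the case boundaries $a=1/2$, $b=1-a$ and $b=a$ fall on the correct side of the stated inequality, which I would settle by direct substitution.
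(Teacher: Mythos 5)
Your proposal is correct and follows essentially the same route as the paper: rewrite everything in terms of $\min$, $\max$ and $1-(\cdot)$, then do the same case splits ($b\geq a$, $a>b\geq 1-a$, $b<\min(a,1-a)$ for (iii); $a+b\lessgtr 1$ for (v)). The only cosmetic difference is in (iv), where the paper observes that both sides are $\max\big(\min(a,b),1-a\big)$ minus $\min(a,b,1-a)$ and $0$ respectively, so equality holds iff $\min(a,b,1-a)=0$, while you reach the same conclusion by a two-case analysis on $\min(a,b)\gtrless 1-a$; both are valid.
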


\begin{remark}
The equalities in {\rm(i)} -- {\rm(v)} correspond exactly to the identities {\rm(R1)} -- {\rm(R5)}.
\end{remark}

\begin{proof}[Proof of Lemma~\ref{lem2}]
\
\begin{enumerate}[(i)]
\item $a\oplus b=\max(a,b)-\min(a,b)=\max(b,a)-\min(b,a)=b\oplus a$
\item $(a\odot b\oplus1)\odot(a\oplus1)\oplus1=1-\min\big(1-\min(a,b),1-a\big)=\max(\min(a,b),a)=a$
\item If $b\geq a$ then
\begin{align*}
\big((a\odot b\oplus1)\odot a\oplus1\big)\odot a & =\min\big(1-\min(1-a,a),a\big)=\min\big(\max(a,1-a),a\big)= \\
                                                 & =a=\min(a,b)=a\odot b.
\end{align*}
If $b<a$ and $b\geq1-a$ then
\[
\big((a\odot b\oplus1)\odot a\oplus1\big)\odot a=\min(b,a)=\min(a,b)=a\odot b.
\]
If, finally, $b<a$ and $b<1-a$ then
\[
\big((a\odot b\oplus1)\odot a\oplus1\big)\odot a=\min(1-a,a)>b=\min(a,b)=a\odot b.
\]
\item Because of
\begin{align*}
a\odot b\oplus(a\oplus1) & =\max\big(\min(a,b),1-a\big)-\min\big(\min(a,b),1-a\big)= \\
                         & =\max\big(\min(a,b),1-a\big)-\min(a,b,1-a)
\end{align*}
and
\[
(a\odot b\oplus1)\odot a\oplus1=1-\min\big(1-\min(a,b),a\big)=\max\big(\min(a,b),1-a\big),
\]
$a\odot b\oplus(a\oplus1)=(a\odot b\oplus1)\odot a\oplus1$ if and only if $\min(a,b,1-a)=0$ which means $a\in\{0,1\}$ or $b=0$.
\item If $a\leq1-b$ then
\begin{align*}
a\oplus b & =\max(a,b)-\min(a,b)= \\
          & =\max\big(\min(a,1-b),\min(1-a,b)\big)-\min\big(\min(a,1-b),\min(1-a,b)\big)= \\
					& =a\odot(b\oplus1)\oplus(a\oplus1)\odot b.
\end{align*}
If $a\geq1-b$ then
\begin{align*}
a\oplus b & =\max(a,b)-\min(a,b)=1-\min(a,b)-\big(1-\max(a,b)\big)= \\
          & =\max(1-b,1-a)-\min(1-b,1-a)= \\
          & =\max\big(\min(a,1-b),\min(1-a,b)\big)-\min\big(\min(a,1-b),\min(1-a,b)\big)= \\
          & =a\odot(b\oplus1)\oplus(a\oplus1)\odot b.
\end{align*}
\end{enumerate}
\end{proof}

Now we can prove our final result.

\begin{theorem}
For $\mathbf Q=(Q,\oplus,\odot,0,1)$ the following hold:
\begin{enumerate}[{\rm(i)}]
\item $\mathbf Q$ is a specific near-{\rm RLSE},
\item $\mathbf Q$ is a {\rm GFE} in respect to the order $\leq$ of functions,
\item the following are equivalent:
\begin{enumerate}[{\rm(a)}]
\item $Q\subseteq\{0,1\}^S$,
\item $\mathbf Q$ satisfies identity {\rm(R3)},
\item $\mathbf Q$ satisfies identity {\rm(R4)},
\item $\mathbf Q$ is an {\rm RLSE},
\item $\mathbf Q$ is a Boolean ring.
\end{enumerate}
\end{enumerate}
\end{theorem}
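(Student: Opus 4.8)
The plan is to handle the three parts separately, using Lemma~\ref{lem1} and Lemma~\ref{lem2} as the computational engine and reducing every function identity to a pointwise statement about values in $[0,1]$. For (i) I would first check that $(Q,\odot,0,1)$ is a bounded meet-semilattice directly: since $\odot$ is the pointwise minimum it is commutative, associative and idempotent, with $1\odot p=p$ and $0\odot p=0$, so $1$ is the top and $0$ the bottom element. A specific near-RLSE additionally requires only the identities (R1), (R2) and (R5); by the Remark following Lemma~\ref{lem2} these correspond exactly to items (i), (ii) and (v) of that lemma, which hold for \emph{all} arguments in $[0,1]$. Evaluating at $p(s),q(s)$ for each $s\in S$ therefore turns them into the desired function identities, so $\mathbf Q$ is a specific near-RLSE.

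For (ii) I would verify the three GFE axioms with respect to the function order. Axiom (S1) is given, and (S2) holds because $p'=1-p=p\oplus1$ lies in $Q$ by closure under $\oplus$. The real work is (S3$'$): if $p\perp q$, i.e.\ $p(s)+q(s)\le1$ for all $s$, I must show the \emph{function sum} $p+q$ belongs to $Q$. The subtlety is that here $\oplus$ is the absolute difference rather than the sum, so (unlike in Proposition~\ref{prop3}(iv)) one cannot simply write $p\oplus q=p+q$. Instead I would check pointwise that $(p\oplus(q\oplus1))\oplus1=p+q$ precisely under the orthogonality hypothesis: $q\oplus1=1-q$, then $|p(s)-(1-q(s))|=1-(p(s)+q(s))$ since $p(s)+q(s)\le1$, and a final $\oplus1$ restores $p(s)+q(s)$. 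As the left-hand side lies in $Q$ by closure under $\oplus$ together with $1\in Q$, this gives $p+q\in Q$.

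For (iii) I would run a cycle of implications. The arrow (a)$\Rightarrow$(e) uses Lemma~\ref{lem1}: on two-valued functions $\oplus$ and $\odot$ restrict to the Boolean-ring operations ($\oplus$ is $\mathbb Z_2$-addition, $\odot$ is multiplication), so $\mathbf Q$ is a subalgebra of the Boolean ring $\{0,1\}^S$ containing $0,1$, hence itself a Boolean ring. Then (e)$\Rightarrow$(d) is immediate since all Boolean rings are RLSEs, and (d)$\Rightarrow$(b) as well as (d)$\Rightarrow$(c) hold because every RLSE satisfies (R3) and (R4) by definition. It remains to close the loop with the two nontrivial arrows (b)$\Rightarrow$(a) and (c)$\Rightarrow$(a), both obtained by specializing the pointwise characterizations in Lemma~\ref{lem2}. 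If (R3) holds identically, Lemma~\ref{lem2}(iii) forces $q(s)\ge\min\big(p(s),1-p(s)\big)$ for all $p,q\in Q$ and all $s$; taking $q=0$ yields $\min\big(p(s),1-p(s)\big)=0$, i.e.\ $p(s)\in\{0,1\}$, which is (a). Symmetrically, if (R4) holds identically, Lemma~\ref{lem2}(iv) forces $p(s)\in\{0,1\}$ or $q(s)=0$; taking $q=1$ rules out the second alternative and again gives $p(s)\in\{0,1\}$, so (a). This yields the full equivalence of (a)--(e).

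The step I expect to be the main obstacle is the GFE-closure (S3$'$) in part (ii): because $\oplus$ is the pointwise absolute difference and not the function sum, the closure of $Q$ under $\oplus$ does not obviously yield closure under $+$ on orthogonal pairs, and one has to spot the correct term $(p\oplus(q\oplus1))\oplus1$ that realizes $p+q$ exactly when $p(s)+q(s)\le1$. Once this identity is in hand, everything else follows mechanically from the pointwise computations already packaged in Lemmas~\ref{lem1} and~\ref{lem2}, the clever substitutions $q=0$ and $q=1$ being the only other ingredients.
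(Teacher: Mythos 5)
Your proof is correct, and for parts (i), (ii) and the equivalences among (a)--(d) it follows the paper's own route: part (ii) uses exactly the same term $\big(p\oplus(q\oplus1)\big)\oplus1=p+q$, and the implications (b)$\Rightarrow$(a) and (c)$\Rightarrow$(a) come from the ``if and only if'' clauses of Lemma~\ref{lem2}\,(iii) and (iv) with the same decisive substitutions $q=0$ and $q=1$ (the paper compresses all of this into ``can be directly derived from Lemma~\ref{lem2}'', so your cycle (a)$\Rightarrow$(e)$\Rightarrow$(d)$\Rightarrow$(b)$\Rightarrow$(a), (d)$\Rightarrow$(c)$\Rightarrow$(a) just makes the implicit structure explicit). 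The one step where you genuinely diverge is the Boolean-ring implication. The paper proves $\big(\text{(a) and (d)}\big)\Rightarrow\text{(e)}$ by using Lemma~\ref{lem1} to verify the identity $p\odot(q\oplus1)=p\odot q\oplus p$ and then invoking Corollary~\ref{cor3}, the characterization of Boolean rings among specific RLSEs, which itself rests on the commutation theory of orthomodular lattices via Corollary~\ref{cor2} and Kalmbach's results. You instead prove (a)$\Rightarrow$(e) directly: under (a), Lemma~\ref{lem1} identifies $\oplus$ with addition modulo $2$ and $\odot$ with multiplication, so $Q$ is a unital subring of the product Boolean ring $\{0,1\}^S$ and hence a Boolean ring. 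Your argument is more elementary and self-contained (it needs neither hypothesis (d) nor any of the Section~2 corollaries), at the cost of appealing to the standard fact that unital subrings of Boolean rings are Boolean rings; the paper's version buys economy by reusing its earlier structure theory. Both arguments are sound, and both rely on the same implicit hypothesis, worth stating explicitly as you did for the meet-semilattice check in (i): that $Q$ is closed under $\oplus$ and $\odot$, which is built into $\mathbf Q$ being an algebra.
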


\begin{proof}
Let $p,q\in Q$.
\begin{enumerate}[(i)]
\item follows immediately from Lemma~\ref{lem2}.
\item If $p\perp q$ (in respect to $\leq$, not in the sense of RLSEs) then $p\leq1-q$ and hence
\[
p+q=1-(1-q-p)=1-\big(\max(p,1-q)-\min(p,1-q)\big)=\big(p\oplus(q\oplus1)\big)\oplus1\in Q.
\]
\item The equivalence of (a) -- (d) can be directly derived from Lemma~\ref{lem2}. \\
\big((a) and (d)\big) $\Rightarrow$ (e): \\
Using Lemma~\ref{lem1} we obtain
\[
p\odot(q\oplus1)=p(1-q)=p-pq=pq+p-2pqp=p\odot q\oplus p
\]
which by Corollary~\ref{cor3} is equivalent to (e). \\
(e) $\Rightarrow$ (d): \\
This is already well-known.
\end{enumerate}
\end{proof}

Authors' addresses:

Dietmar Dorninger \\
TU Wien \\
Faculty of Mathematics and Geoinformation \\
Institute of Discrete Mathematics and Geometry \\
Wiedner Hauptstra\ss e 8-10 \\
1040 Vienna \\
Austria \\
dietmar.dorninger@tuwien.ac.at

Helmut L\"anger \\
TU Wien \\
Faculty of Mathematics and Geoinformation \\
Institute of Discrete Mathematics and Geometry \\
Wiedner Hauptstra\ss e 8-10 \\
1040 Vienna \\
Austria, and \\
Palack\'y University Olomouc \\
Faculty of Science \\
Department of Algebra and Geometry \\
17.\ listopadu 12 \\
771 46 Olomouc \\
Czech Republic \\
helmut.laenger@tuwien.ac.at
\end{document}